\begin{document}
\newcommand{\norm}[1]{\left\Vert#1\right\Vert}
\newcommand{\abs}[1]{\left\vert#1\right\vert}
\newcommand{\set}[1]{\left\{#1\right\}}
\newcommand{\Real}{\mathbb{R}}
\newcommand{\supp}{\operatorname{supp}}
\newcommand{\card}{\operatorname{card}}
\renewcommand{\L}{\mathcal{L}}
\renewcommand{\P}{\mathcal{P}}
\newcommand{\T}{\mathcal{T}}
\newcommand{\A}{\mathbb{A}}
\newcommand{\K}{\mathcal{K}}
\renewcommand{\S}{\mathcal{S}}
\newcommand{\blue}[1]{\textcolor{blue}{#1}}
\newcommand{\red}[1]{\textcolor{red}{#1}}
\newcommand{\Id}{\operatorname{I}}
\def\SL{\sqrt[m] L}
\def\RR{\mathbb R}
\def \R   {\mathbb{R}^n}
\def \RN   {\mathbb{R}^n}
\newcommand\C{{\mathbb C}}
\newcommand\CC{\mathbb{C}}
\newcommand\NN{\mathbb{N}}

\newtheorem{thm}{Theorem}[section]
\newtheorem{prop}[thm]{Proposition}
\newtheorem{cor}[thm]{Corollary}
\newtheorem{lem}[thm]{Lemma}
\newtheorem{lemma}[thm]{Lemma}
\newtheorem{exams}[thm]{Examples}
\theoremstyle{definition}
\newtheorem{defn}[thm]{Definition}
\newtheorem{rem}[thm]{Remark}

\numberwithin{equation}{section}

\title[Convergence of spectral sums for self-adjoint operators ]
{Almost everywhere convergence of spectral sums for self-adjoint operators}

 \author[Peng Chen, Xuan Thinh Duong and Lixin Yan]{Peng Chen, Xuan Thinh Duong and Lixin Yan}

\address{Peng Chen, Department of Mathematics, Sun Yat-sen
 University, Guangzhou, 510275, P.R. China}
 \email{chenpeng3@mail.sysu.edu.cn}
 \address {Xuan Thinh Duong, Department of Mathematics, Macquarie University, NSW 2109, Australia}
\email{xuan.duong@mq.edu.au}
 \address{Lixin Yan, Department of Mathematics, Sun Yat-sen   University,
Guangzhou, 510275, P.R. China}
\email{mcsylx@mail.sysu.edu.cn}

 \title[Convergence of spectral sums for self-adjoint operators ]
{Almost everywhere convergence of spectral sums for self-adjoint operators}

\date{\today}
\subjclass[2000]{42B15, 42B25,   47F05.}
\keywords{Almost everywhere convergence,  the spherical partial sums,
 non-negative self-adjoint operators, Rademacher-Menshov theorem,  Plancherel-type estimate.}

\begin{abstract}
Let $L$
be a non-negative self-adjoint operator acting on the
space $L^2(X)$, where $X$ is a metric measure space.
Let  ${  L}=\int_0^{\infty} \lambda dE_{  L}({\lambda})$  be the spectral resolution of  ${  L}$
   and
    $
   S_R({ L})f=\int_0^R dE_{  L}(\lambda) f
   $
 denote  the spherical partial sums  in terms of the resolution of ${  L}$.
In this article we give a sufficient condition on $L$
such that
$$\lim_{R\rightarrow \infty}  S_R({ L})f(x)  =f(x),\ \  {\rm a.e.}
$$
for any $f$ such that ${\rm log } (2+L) f\in L^2(X)$.

 These results are applicable to large classes of operators including
Dirichlet operators on smooth bounded domains, the Hermite operator  and Schr\"odinger operators with inverse square potentials.
 \end{abstract}

\maketitle


\section{Introduction}
 \setcounter{equation}{0}

 The almost-everywhere convergence of the spherical partial sums
 \begin{eqnarray*}
 S_Rf(x)=\int_{|\xi|\leq R}  \widehat{f}(\xi)   e^{2\pi i x\cdot \xi} d\xi
\end{eqnarray*}
on $L^2(\mathbb R^n)$ is a well-known classical problem in  Fourier analysis, where  $\widehat{f}$ denotes the Fourier transform of
$f.$
In the one-dimensional case $n=1$,  a celebrated theorem of Carleson\cite{Car} states  that for $f\in L^2(\mathbb R)$,
 \begin{eqnarray*}
 \lim_{R\to \infty}S_Rf(x)= f(x)\ \ \ \ \ {\rm for \ almost \ every \ } x.
\end{eqnarray*}
For $n\geq 2$,  Carbery and Soria \cite[Theorem 3]{CS} proved that $\lim\limits_{R\rightarrow \infty}S_Rf(x)=f(x) $
almost everywhere for any $f$ such that  ${\rm log}(2+\Delta) f\in L^2(\mathbb R^n)$, where $\Delta=-\sum_{i=1}^n\partial_{x_i}^2$ denotes the classical
Laplace operator on ${\mathbb R}^n.$

 In \cite{MMP},  Meaney, M\"uller and Prestini  extended the result of Carbery and Soria
   to  arbitrary right-invariant sub-Laplacian ${\mathbb L}$ on a connected Lie group ${\mathbb G}$.
   Let  ${\mathbb L}=\int_0^{\infty} \lambda \ dE_{\mathbb L}({\lambda})$  be the spectral resolution of  ${\mathbb L}$
   and
   $$
   S_R({\mathbb L})f(x)=\int_0^R dE_{\mathbb L}(\lambda) f(x)
   $$
 denote  the spherical partial sums  in terms of the resolution of ${\mathbb L}$.
They showed that $S_R({\mathbb L})f(x) $ converges a.e. to $f(x)$ as $R\rightarrow \infty $
when ${\rm log}(2+{\mathbb L}) f\in L^2({\mathbb G})$. Their proof is based on the Rademacher-Menshov theorem (\cite{CMP, Z}).
It also employs an extension of a Plancherel theorem as in \cite{HJ} and \cite{Ch}  to arbitrary connected Lie groups
${\mathbb G}$, which says  that for any Borel measurable essentially bounded function $F$ on $[0, \infty)$ and for  the spectral
multiplier
  $F({\mathbb L})f =K_F\ast f$ corresponds a unique distribution   $K_F$,
  there exists a unique $\sigma$-finite positive Borel measure $\omega$ on $[0, \infty)$ such that the following holds:
 \begin{eqnarray}\label{e1.1}
\|K_F\|_2^2=\int_0^{\infty} | F(\lambda)|^2 d\omega(\lambda).
\end{eqnarray}

 \smallskip

In this article we   assume that  $(X,d,\mu)$ is  a separable metric measure
space, that is  $\mu$ is a Borel measure with respect to the
topology defined by the metric $d$. Next let $B(x, r) = \{y\in X: d(x, y) < r\}$ be the open
ball with center $x\in X$ and radius $r>0$.   Given a  subset $E\subseteq X$, we  denote by  $\chi_E$   the characteristic
function of   $E$ and  set
$
P_Ef(x)=\chi_E(x) f(x).
$
We consider   a non-negative self-adjoint operator $L$ acting on $L^2(X)$.
  Such an operator admits a spectral resolution $E_L(\lambda)$ and
we define the spherical partial sums for $L$ by
$$
S_R(L)f(x) = \int_0^R dE_L(\lambda)f(x).
$$
The  aim  of this article is to  investigate
 when it is possible to replace condition
\eqref{e1.1} in the Meaney-M\"uller-Prestini theorem  by other suitable condition  to study
  almost everywhere convergence of spherical partial sums  in the
general setting of abstract operators rather than in a specific setting of group
invariant operators acting on Lie groups.
To do it,  we recall that in  (\cite[3.1]{DOS}), Duong,  Ouhabaz  and
  Sikora introduced the so-called Plancherel-type estimate   to establish the sharp H\"ormande-type spectral multiplier theorems  for
 $L$. We say  that $L$ satisfies the Plancherel-type estimate if there exists $C>0$ such that
  for all $M>0$, $y\in X$ and all Borel functions $F$  such that\, {\rm supp} $F\subseteq [0, M]$,
  \begin{eqnarray}\label{e1.2}
\int_X |K_{F(\sqrt[m]{L})}(x,y)|^2 d\mu(x) \leq {\frac{C}{\mu(B(y, M^{-1}))}} \|F(M\cdot)\|^2_{L^2},
\end{eqnarray}
where $K_{F(\sqrt[m]{L})}(x,y): X \times X\to {\mathbb C}$ denotes the kernel of the integral operator $F(\sqrt[m]{L}),$
and $m$ is positive constant and $m\geq 2.$ For the standard Laplace operator $\Delta $ on ${\mathbb R^n}$,
 it is well-known (\cite[Proposition 2.4]{COSY}) that condition \eqref{e1.2} is equivalent to
the $(1,2)$ restriction estimate  of Stein-Tomas, i.e.
$$
 \|dE_{\sqrt{\Delta}}(\lambda)\|_{L^1\to L^\infty} \leq C\lambda^{n-1}.
$$
Alternative form of the Plancherel-type estimate was introduced in \cite[(4.3)]{KU} by Kunstmann and
Uhl,  and can be formulated in the following way:
\begin{eqnarray}\label{e1.3}
\|F(\SL)P_{B(x, 1/M)} \|_{2\to 2} \leq   \|F(M\cdot)\|_{2}
\end{eqnarray}
for all $M>0, x\in X$ and all bounded Borel functions $F$ with $\supp F\subset [0, M]$.
Note that
\begin{eqnarray*}
\|F(\SL)P_{B(x, 1/M)} \|_{2\to 2} \leq  \|F(\SL)P_{B(x, 1/M)} \|_{1\to 2}  \| P_{B(x, 1/M)} \|_{2\to 1},
\end{eqnarray*}
so, by H\"older's inequality,  estimate \eqref{e1.2}   implies \eqref{e1.3} provided that
$X$ is a space of homogeneous type (see Section 2 below). For more information
about  \eqref{e1.2} and \eqref{e1.3}, we refer to \cite{COSY, DOS, KU} and the references therein.

Motivated by the Plancherel-type estimates \eqref{e1.2} and \eqref{e1.3} above, we have the following result.

\begin{thm}\label{th1.1}
Let $(X,d, \mu)$ be a metric measure space and $L$ satisfies
the   Plancherel-type estimate: for all compact subset $K$, there exist positive constants
$C_K$ and $a$ such that for all $M>1$, all Borel functions $F$
 with $\supp F\subset [M/4, M]$
\begin{align}\label{e1.4}
\|F(L)\chi_{K}\|_{2\to 2}\leq C_K M^{a}\|F(M\cdot)\|_{L^2}.
\end{align}
If \,$\log(2+L)f\in L^2(X)$, then
$$
\lim_{R\to \infty} S_R(L) f(x)=f(x)
$$
for almost every $x\in X$. Moreover, for every compact subset $K$ of $X$ there exists a constant $C_K>0$ such that
\begin{align}\label{e1.5}
\int_K \big|\sup_{R>0}|S_R(L) f(x)|\big|^2d\mu(x)\leq C_K \|\log(2+L)f\|_2^2.
\end{align}
\end{thm}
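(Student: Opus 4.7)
The plan is to adapt the scheme of Carbery--Soria and Meaney--M\"uller--Prestini: a dyadic decomposition of the spectrum combined with the Rademacher--Menshov maximal inequality for orthogonal series. In the group setting of \cite{MMP} the Plancherel identity \eqref{e1.1} was the key technical device; here its role will be taken by the local Plancherel-type hypothesis \eqref{e1.4}.

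Set $\pi_{-1}:=E_L(1)$ and $\pi_j:=E_L(2^{j+1})-E_L(2^j)$ for $j\ge 0$. Then $f=\sum_{j\ge -1}\pi_j f$ is an orthogonal decomposition, and the spectral theorem translates the hypothesis $\log(2+L)f\in L^2(X)$ into $\sum_j(1+j)^2\|\pi_j f\|_2^2\asymp\|\log(2+L)f\|_2^2<\infty$. For $R\in[2^j,2^{j+1})$ split
\[
S_R(L)f \;=\; S_{2^j}(L)f \;+\; (E_L(R)-E_L(2^j))\pi_j f,
\]
whence
\[
\sup_R|S_R(L)f(x)| \;\le\; \sup_j|S_{2^j}(L)f(x)| \;+\; \sup_j M_j f(x),
\]
where $M_j f(x):=\sup_{R\in[2^j,2^{j+1})}|(E_L(R)-E_L(2^j))\pi_j f(x)|$. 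Since $\{\pi_j f\}$ is an orthogonal system in $L^2(X)$, the Rademacher--Menshov theorem bounds the dyadic partial-sum maximal in $L^2(X)$ by $C\sum_j(1+j)^2\|\pi_j f\|_2^2\lesssim\|\log(2+L)f\|_2^2$; this estimate is global on $X$ and requires no compact restriction.

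The remaining task is to prove the block-maximal estimate
\[
\|M_j f\|_{L^2(K)} \;\lesssim\; C_K\,(1+j)\,\|\pi_j f\|_2 \qquad(\ast)
\]
on every compact $K\subset X$, for then $\|\sup_j M_j f\|_{L^2(K)}^2\le\sum_j\|M_j f\|_{L^2(K)}^2\lesssim\sum_j(1+j)^2\|\pi_j f\|_2^2$, which together with the previous step yields the maximal inequality \eqref{e1.5}. The approach to $(\ast)$ is a chaining argument inside the dyadic block: subdivide $[2^j,2^{j+1})$ into $2^n$ equal sub-intervals with endpoints $R_{n,i}=2^j(1+i\,2^{-n})$ and decompose $(E_L(R)-E_L(2^j))\pi_j f$ as a telescoping sum over refinement scales $n\ge 1$. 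At each fineness $n$ the incremental spectral projections form an orthogonal system of $2^n$ pieces spectrally supported in sub-intervals of length $2^{j-n}$; the Plancherel-type hypothesis \eqref{e1.4} applied with $M\asymp 2^j$ controls each such piece in $L^2(K)$ with a $2^{-n/2}$ gain from the shorter sub-intervals, and a Rademacher--Menshov maximal inequality at refinement scale $n$ contributes a further $n^2$-factor; a geometric-type summation in $n$ is expected to produce $(\ast)$.

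The main obstacle is precisely this last bookkeeping: the Plancherel-type estimate \eqref{e1.4} carries an $M^a=2^{ja}$ factor whose $j$-dependence must be absorbed by the interplay between orthogonality inside the block and the sub-interval Rademacher--Menshov argument, so that only the polynomial $(1+j)$ factor allowed by the log-Sobolev hypothesis survives in $(\ast)$. Once the maximal inequality \eqref{e1.5} is established, almost-everywhere convergence of $S_R(L)f$ follows from the standard density argument: for $f\in E_L(N)L^2(X)$ with $N<\infty$ one has $S_R(L)f=f$ for every $R\ge N$, so $S_R(L)f\to f$ trivially on the dense subspace $\bigcup_N E_L(N)L^2(X)$, and the maximal inequality extends convergence a.e.\ to all $f$ with $\log(2+L)f\in L^2(X)$.
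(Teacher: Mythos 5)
Your overall architecture (orthogonal spectral blocks, Rademacher--Menshov for the block partial sums, the local Plancherel hypothesis for the intra-block supremum, density for the a.e.\ statement) is the right one, and the global Rademacher--Menshov step and the final density argument are fine. But the heart of the matter is the block-maximal estimate $(\ast)$, and you have not proved it: you explicitly leave ``this last bookkeeping'' as ``the main obstacle.'' Worse, the mechanism you sketch does not deliver $(\ast)$. Applying \eqref{e1.4} with $M\asymp 2^{j}$ to a sub-interval of length $2^{j-n}$ gives $\|\chi_K E_L(\alpha,\beta]\|_{2\to 2}\le C_K 2^{ja}2^{-n/2}$, so chaining over refinement scales $n$ and summing the geometric series produces $\|M_jf\|_{L^2(K)}\lesssim C_K\,2^{ja}\|\pi_jf\|_2$, not $(1+j)\|\pi_jf\|_2$; the factor $2^{ja}$ is exponential in $j$ and cannot be absorbed by the hypothesis $\sum_j(1+j)^2\|\pi_jf\|_2^2<\infty$. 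Your route can in fact be repaired, but only by an ingredient you do not mention: for the coarse refinement scales $n\lesssim 2ja$ one must discard the Plancherel bound and use instead the trivial bound $\|\max_i|Q_{n,i}\pi_jf|\|_{L^2(K)}\le(\sum_i\|Q_{n,i}\pi_jf\|_2^2)^{1/2}=\|\pi_jf\|_2$ coming from orthogonality, switching to the Plancherel bound only for $n\gtrsim 2ja$; the $O(ja)$ coarse scales then account exactly for the admissible factor $(1+j)$ in $(\ast)$. As written, your proposal is a plan whose decisive quantitative step is missing and whose stated summation scheme fails.

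The paper sidesteps this difficulty by not using dyadic blocks at all. It partitions the spectrum at the points $\lambda_k=k^{1/(2a)}$, where $a$ is the exponent in \eqref{e1.4}. These blocks are so thin that the Plancherel factor $\big(\lambda_{k+1}^{2a-1}(\lambda_{k+1}-\lambda_k)\big)^{1/2}$ is bounded uniformly in $k$, so the intra-block maximal term costs nothing beyond $\sum_k\|E_L(\lambda_k,\lambda_{k+1}]f\|_2^2=\|f\|_2^2$; that term is handled by duality against $L^2(K,L^1[\lambda_k,\lambda_{k+1}])$ and a Riemann--Stieltjes integration by parts rather than by a second chaining. The price of having polynomially many blocks below height $\lambda$ (namely $k\asymp\lambda_k^{2a}$) is paid entirely in the Rademacher--Menshov step, where $\log(2+k)\asymp\log(2+\lambda_k)$ matches the hypothesis $\log(2+L)f\in L^2$. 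You should either adopt this adapted partition or supply the coarse-scale/fine-scale dichotomy above; without one of these, $(\ast)$ remains unproved and the argument does not close.
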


\medskip

It is not difficult to see that  the   Plancherel type estimate \eqref{e1.4} implies that
  the set of point spectrum of $L$ is empty in $(1/4, \infty)$.   Indeed, one has, for   $0\leq \lambda< M$, 
$
\|1\!\!1_{\{\lambda \}  }( {L}\,) \chi_{K}\|_{2\to 2}
\leq C_K  \|1\!\!1_{\{\lambda \} }(M\cdot)\|_{2} =0,
$
and thus $1\!\!1_{\{\lambda\} }( {L}\,)=0$. Since  $\sigma(L)\subseteq [0, \infty)$, it is clear that the point spectrum of $L$
is empty in $(1/4, \infty)$.
In particular, \eqref{e1.4} does  not hold  for elliptic
operators on compact manifolds or for the harmonic oscillator.
 In order to treat these cases,  we will   prove the following result.

\begin{thm}\label{th1.2}
Let $(X,d,\mu)$ be a metric measure space and assume that  the spectrum of $L$ is purely discrete, i.e. the essential spectrum is empty. Let
$\lambda_1<\lambda_2<\cdots\lambda_k<\cdots$ be all the different eigenvalues of $L$.
Assume that there exist constants $A,a>0$ such that for large enough natural number $k$
\begin{align}\label{e1.6}
k\leq A\lambda_k^a.
\end{align}
If \,$\log(2+L)f\in L^2(X)$, then
$$
\lim_{R\to \infty} S_R(L) f(x)=f(x)
$$
for almost every $x\in X$. Moreover, for every compact subset $K$ of $X$ there exists a constant $C_K>0$ such that
\begin{align}\label{e1.7}
\int_K \big|\sup_{R>0}|S_R(L) f(x)|\big|^2d\mu(x)\leq C_K \|\log(2+L)f\|_2^2.
\end{align}
\end{thm}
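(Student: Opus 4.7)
The strategy is to use the discreteness of the spectrum to rewrite $S_R(L)f$ as a piecewise constant step function in $R$, reducing the desired maximal inequality to one about orthogonal series in $L^2$, and then to apply the Rademacher--Menshov theorem directly.

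Since $L$ is self-adjoint with purely discrete spectrum, we write $f=\sum_{k\ge 1} P_k f$, where $P_k$ is the orthogonal projection onto the eigenspace of $\lambda_k$. The spectral theorem gives
$$S_R(L)f(x)=\sum_{\lambda_k\le R} P_kf(x),$$
which, as a function of $R>0$, jumps only at the eigenvalues $\lambda_k$; consequently
$$\sup_{R>0}|S_R(L)f(x)|=\sup_{N\ge 1}\Bigl|\sum_{k=1}^N P_kf(x)\Bigr|,$$
so the continuous maximal operator coincides with the discrete maximal operator associated to the partial sums of the orthogonal series $\sum_k P_kf$.

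Since $\{P_kf\}_{k\ge 1}$ is pairwise orthogonal in $L^2(X)$, the classical Rademacher--Menshov theorem (see \cite{CMP,Z}) yields
$$\Bigl\|\sup_{N\ge 1}\Bigl|\sum_{k=1}^N P_kf\Bigr|\,\Bigr\|_{2}^{2}\le C\sum_{k=1}^\infty\bigl(\log(k+2)\bigr)^{2}\,\|P_kf\|_2^2.$$
The hypothesis \eqref{e1.6} yields the elementary comparison $\log(k+2)\le C\log(2+\lambda_k)$ for all $k\ge 1$ (the finitely many small indices are absorbed into the constant because $\log(2+\lambda_k)\ge\log 2>0$). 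Since $\log(2+L)$ acts on the eigenspace of $\lambda_k$ as multiplication by $\log(2+\lambda_k)$, Parseval's identity gives
$$\sum_{k=1}^\infty\bigl(\log(2+\lambda_k)\bigr)^{2}\|P_kf\|_2^2=\|\log(2+L)f\|_2^2.$$
Combining these observations produces the global maximal inequality
$$\bigl\|\sup_{R>0}|S_R(L)f|\bigr\|_{L^2(X)}^{2}\le C\,\|\log(2+L)f\|_2^2,$$
which is strictly stronger than \eqref{e1.7}; the latter follows by restriction to $K$.

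Almost-everywhere convergence is then immediate: Rademacher--Menshov already asserts that $\sum_k P_kf$ converges pointwise a.e.\ once its weighted square sum is finite, and the $L^2$-limit of this series is $f$ by the spectral theorem, so the two limits must coincide a.e. I anticipate no serious obstacle in this argument; the only slightly technical point is the uniform logarithm comparison $\log(k+2)\le C\log(2+\lambda_k)$, which however is immediate from \eqref{e1.6} by elementary calculus. The proof should be substantially simpler than that of Theorem~\ref{th1.1}, because the polynomial eigenvalue bound plays the role of a Plancherel-type estimate directly for the discrete series.
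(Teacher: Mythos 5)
Your proposal is correct and follows essentially the same route as the paper: decompose $S_R(L)f$ into the orthogonal series $\sum_k P_kf$ (so the continuous sup equals the discrete sup over partial sums), apply the Rademacher--Menshov theorem, and control the weights via $\log(2+k)\leq C\log(2+\lambda_k)$ from \eqref{e1.6} together with Parseval, yielding the maximal bound by $\|\log(2+L)f\|_2^2$. Your explicit identification of the a.e.\ limit with $f$ via the $L^2$-limit is a small point the paper leaves implicit, but the argument is the same.
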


\smallskip

  We would like to mention that in Theorem~\ref{th1.1}, when $X$ is a space   of homogeneous type,  either
  \eqref{e1.2} or \eqref{e1.3} implies estimate \eqref{e1.4}, see
  Lemma~\ref{le2.2} below.
  There are several examples of operators discussed in   \cite{COSY, DOS, KU}  which  satisfy
 the Plancherel-type estimate  \eqref{e1.2} or \eqref{e1.3}.
  In particular, \eqref{e1.2} holds for positive definite self-adjoint right invariant operators
  and quasi-homogeneous operators acting on a homogeneous group, see \cite[Section  7.1]{DOS}.
  However, it is not clear for us whether or not estimate \eqref{e1.2} holds  for the right-invariant sub-Laplacian ${\mathbb L}$
  on a connected Lie group ${\mathbb G}$.

Note that in Theorem~\ref{th1.2}, if   the number $N(\lambda)$ of eigenvalues in $[0,\lambda]$,
counted with the multiplicities of each eigenvaule, satisfies
\begin{eqnarray}\label{e1.8}
N(\lambda)\leq A\lambda^{a},
\end{eqnarray}
 then for eigenvalue $\lambda_k$,
$$
k\leq N(\lambda_k)\leq A\lambda_k^{a}.
$$
Estimate \eqref{e1.8} can be derived from the Weyl formula for $L$, see for examples,  Sections 5.1 and 5.2
below. As pointed in \cite[p. 470]{DOS}, in the case of group invariant operators on
compact Lie groups the Plancherel-type estimates and the sharp Weyl formula
are equivalent.

Our  Theorems ~\ref{th1.1} and  ~\ref{th1.2} are applicable to large classes of operators including
Dirichlet operators on bounded domains, the Hermite operator   and Schr\"odinger operators with the inverse square potentials.
See Section 5 below for details.

\section{Preliminary  results}
 \setcounter{equation}{0}

As mentioned in Introduction, the proofs of Theorems  \ref{th1.1} and \ref{th1.2} are based
 on the following Rademacher-Menshov Theorem (see \cite{A,Z}).

\begin{thm}[Rademacher-Menshov Theorem]\label{th2.1}
Suppose that $(X,\mu)$ is a positive measure space. There is a positive constant $c$ with the following property:
For each orthogonal subset $\{f_k:k\in \mathbb{N}\}$ in $L^2(X)$ satisfying
\begin{align}\label{e2.1}
\sum_{k=0}^\infty (\log(2+k))^2 \|f_k\|_2^2<\infty,
\end{align}
the maximal function
$$
F^*(x):=\sup_{N\in \NN}|\sum_{k=0}^N f_k(x)|
$$
is in $L^2(X)$, and
\begin{align}\label{e2.2}
\|F^*\|^2_2\leq c\sum_{k=0}^\infty (\log(2+k))^2 \|f_k\|_2^2.
\end{align}

In particular, when \eqref{e2.1} holds, then the series $\sum_{n=1}^{\infty}
f_k(x)$ converges almost everywhere on $X.$
\end{thm}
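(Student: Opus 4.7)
My plan has two main parts: prove the maximal inequality \eqref{e2.2} with a constant uniform in the cutoff, then deduce a.e.\ convergence by a tail estimate.

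\emph{Stage one: unweighted dyadic lemma.} For any orthogonal $\{f_k\}_{k<2^J}$ the key intermediate bound is
$$\Big\|\sup_{N<2^J}|S_N|\Big\|_2^2 \leq c\,J^2 \sum_{k<2^J}\|f_k\|_2^2.$$
Writing $N+1$ in binary decomposes $[0, N+1)$ into at most $J$ dyadic intervals $I_j \in \mathcal{D}_j$, so $S_N = \sum_j D_{I_j}$ with $D_I = \sum_{k \in I}f_k$. Cauchy--Schwarz gives $|S_N|^2 \leq J \sum_{j=0}^{J-1} U_j^2$ with $U_j = \max_{I \in \mathcal{D}_j}|D_I|$, and orthogonality yields $\|U_j\|_2^2 \leq \sum_{I \in \mathcal{D}_j}\|D_I\|_2^2 = \sum_{k<2^J}\|f_k\|_2^2$, from which the bound follows.

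\emph{Stage two: weighted estimate by iterated block decomposition.} Split the indices into dyadic blocks $B_l = [2^l, 2^{l+1})$. For $N \in B_L$ decompose $S_N = S_{2^L} + T_N^{(L)}$ with $T_N^{(L)} = \sum_{k=2^L}^N f_k$. Stage one applied inside each block yields $\|\sup_{N \in B_l}|T_N^{(l)}|\|_2^2 \leq c\, l^2 \sum_{k \in B_l}\|f_k\|_2^2$, which sums in $l$ to exactly the target $c \sum_k (\log(2+k))^2 \|f_k\|_2^2$. The remaining across-block piece $\|\sup_L|S_{2^L}|\|_2^2$ is a maximal function of the same form for the orthogonal block-sum sequence $\Delta_l = \sum_{k \in B_l}f_k$ (with $\|\Delta_l\|_2^2 = \sum_{k \in B_l}\|f_k\|_2^2$), so I iterate the block decomposition on $\{\Delta_l\}$. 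Using the sharpened inequality $(a+b)^2 \leq (1+\epsilon^{-1})a^2 + (1+\epsilon)b^2$ with $\epsilon_n = n^2$ at the $n$th iteration, the accumulated multiplicative factor $\prod_n(1+n^{-2})$ remains bounded, while the $n$th within-block contribution carries the weight $(1+n^2)(\log^{(n)}(2+k))^2$ on $\|f_k\|_2^2$, where $\log^{(n)}$ is the $n$-fold iterated logarithm. The super-exponential decay of iterated logs gives $\sum_{n \geq 1}(1+n^2)(\log^{(n)}(2+k))^2 \leq C(\log(2+k))^2$ uniformly in $k$, and the recursion terminates after $O(\log^* J)$ steps at a trivial base case, yielding \eqref{e2.2} with a constant independent of $J$; Fatou's lemma then passes to the infinite sequence.

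\emph{Stage three: a.e.\ convergence.} Applied to the tail $\{f_k\}_{k > N_0}$, the maximal inequality gives $\|\sup_{N > N_0}|S_N - S_{N_0}|\|_2^2 \leq c \sum_{k > N_0}(\log(2+k))^2 \|f_k\|_2^2$, which tends to $0$ as $N_0 \to \infty$ by hypothesis \eqref{e2.1}. Since this tail supremum is monotone nonincreasing in $N_0$, $L^2$-convergence to zero forces a.e.\ convergence to zero, so $(S_N(x))$ is a.e.\ Cauchy and $\sum_k f_k(x)$ converges almost everywhere.

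The principal obstacle is the recursion in stage two: a naive decomposition $|S_N|^2 \leq 2(|S_{2^L}|^2 + |T_N^{(L)}|^2)$ would introduce a factor of $2$ per iteration and hence an exponential blow-up over the $O(\log^* J)$ iterations needed. The remedy is the weighted AM--GM inequality with carefully tuned $\epsilon_n = n^2$, combined with the rapid decay of iterated logarithms, which keeps both the accumulated multiplicative constants and the summed additive contributions uniformly bounded.
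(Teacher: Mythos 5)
The paper does not actually prove Theorem~\ref{th2.1}; its ``proof'' is a citation to Zygmund (Theorem XIII.10.21) and Alexits (Proposition 2.3.1 and Theorem 2.3.2), so any complete argument you give is necessarily different from what is printed. Your stage one is exactly the classical dyadic lemma (binary decomposition of $[0,N+1)$ into at most $J$ dyadic intervals, Cauchy--Schwarz, orthogonality) and is correct. In stage two, the within-block estimate and its summation in $l$ are also the standard step. Your treatment of the across-block piece $\sup_L|S_{2^L}|$ by iterating the block decomposition to depth $O(\log^* J)$, with the weighted inequality $(a+b)^2\le(1+\epsilon)a^2+(1+\epsilon^{-1})b^2$, $\epsilon_n=n^2$, does work: the within-block contributions are leaves of the recursion (so the constant $c$ of stage one is not compounded), the recursive branch only accumulates $\prod_n(1+n^{-2})<\infty$, and the claimed bound $\sum_{n\ge1}(1+n^2)(\log^{(n)}(2+k))^2\le C(\log(2+k))^2$ is true (for $n\ge 2$ each term is at most $(1+n^2)(\log\log(2+k))^2$ and there are only $\log^*k$ of them; the recursion just stops once a level has boundedly many blocks). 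But this machinery is far heavier than needed: since the hypothesis gives $\sum_l (1+l)^2\|\Delta_l\|_2^2<\infty$, the classical route handles the across-block term in one line, $\sup_L|S_{2^L}|\le\sum_l|\Delta_l|$ and $\bigl\|\sum_l|\Delta_l|\bigr\|_2\le\sum_l\|\Delta_l\|_2\le \bigl(\sum_l(1+l)^{-2}\bigr)^{1/2}\bigl(\sum_l(1+l)^{2}\|\Delta_l\|_2^2\bigr)^{1/2}$, with no recursion, no iterated logarithms and no constant bookkeeping. So your proof is correct in substance but trades a one-step Cauchy--Schwarz for a $\log^*$-deep induction that buys nothing.

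One genuine (though easily repaired) slip is in stage three: $T_{N_0}:=\sup_{N>N_0}|S_N-S_{N_0}|$ is \emph{not} monotone nonincreasing in $N_0$, because the base point $S_{N_0}$ moves; one only gets $T_{N_0+1}\le 2T_{N_0}$. Replace it by the oscillation $R_{N_0}:=\sup_{M,N>N_0}|S_N-S_M|\le 2T_{N_0}$, which is genuinely nonincreasing, so $R_{N_0}\downarrow R_\infty$ with $\|R_\infty\|_2\le\inf_{N_0}\|R_{N_0}\|_2=0$, hence $R_{N_0}\to0$ a.e.\ and $(S_N(x))$ is a.e.\ Cauchy; alternatively, extract a subsequence of $N_0$ along which $T_{N_0}\to0$ a.e.\ and use the triangle inequality. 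With that one-line correction (and a sentence justifying the iterated-log summation and the base case of the recursion), your argument is complete.
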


\begin{proof}
For the proof , we refer to Theorem XIII.10.21 from \cite{Z}, Proposition 2.3.1, and Theorem 2.3.2 from \cite[pp. 79-80]{A}.
\end{proof}

Following   \cite[Chapter 3]{CW}), a space of homogeneous type $(X, d, \mu)$ is a set $X$ together
with a  metric $d$ and a nonnegative measure $\mu$ on $X$ such that $\mu(B(x, r)) < \infty$
for all $x\in X$ and all $r > 0$, and  there  exists a constant $C>0$ such that
\begin{align}\label{e2.3}
V(x,2r)\leq C V(x, r)\quad \forall\,r>0,\,x\in X,
\end{align}
where $ V(x, r)=\mu(B(x,r))$.
If this is the case, there exist  $C, n$ such that for all $\lambda\geq 1$ and $x\in X$
\begin{align}\label{e2.4}
V(x, \lambda r)\leq C\lambda^n V(x,r).
\end{align}
for some $c, n>0$ uniformly for all $\lambda\geq 1$ and $x\in{
X}$. The parameter
$n$ is a measure of the dimension of the space. There also exist $c$ and
$N, 0\leq N\leq n$ so that
\begin{equation}
V(y, r)\leq c\bigg( 1+\frac{d(x,y)}{r}\bigg )^NV(x,r)
\label{e2.5}
\end{equation}
uniformly for all $x,y\in {  X}$ and $r>0$. Indeed, the
property (\ref{e2.5}) with
$N=n$ is a direct consequence of triangle inequality of the metric
$d$ and the strong homogeneity property. In the cases of Euclidean spaces
${\mathbb R}^n$ and Lie groups of polynomial growth, $N$ can be
chosen to be $0$.

As mentioned in Introduction, estimate \eqref{e1.2} implies  \eqref{e1.3} when $X$ is a space   of homogeneous type.
  Now we discuss the relationship between two    Plancherel-type estimates  \eqref{e1.3} and \eqref{e1.4}.
  We have the following result.

\begin{lemma}\label{le2.2}
Let $X$ be a space   of homogeneous type.
Suppose that the operator $L$ satisfies the condition  \eqref{e1.3}, then estimate \eqref{e1.4} holds.
\end{lemma}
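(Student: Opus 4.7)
The plan is to transfer the hypothesis \eqref{e1.3}, which is stated for the operator $\sqrt[m]{L}$, to the operator $L$ itself by a substitution, and then to convert the local (small-ball) estimate into an estimate against $\chi_K$ by a standard covering argument using the doubling property.

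First I would fix $F$ with $\supp F\subset[M/4,M]$ and $M>1$, and set $G(\lambda):=F(\lambda^m)$, so that $G(\sqrt[m]{L})=F(L)$ and $\supp G\subset [(M/4)^{1/m},M^{1/m}]\subset [0,M^{1/m}]$. Applying \eqref{e1.3} with $M$ replaced by $M^{1/m}$ and $F$ by $G$ yields
\begin{equation*}
\|F(L)\,P_{B(x,M^{-1/m})}\|_{2\to 2}\leq \|G(M^{1/m}\cdot)\|_{2}
\end{equation*}
for every $x\in X$. The next step is a change of variable $u=\lambda^m$ in $\|G(M^{1/m}\cdot)\|_{2}^{2}=\int_{0}^{\infty}|F(M\lambda^{m})|^{2}\,d\lambda$; since $1/m-1<0$ and the integrand is supported in $u\in[M/4,M]$, the factor $u^{1/m-1}$ is bounded by $(M/4)^{1/m-1}$, which produces the clean bound
\begin{equation*}
\|G(M^{1/m}\cdot)\|_{2}\leq C_{m}\,\|F(M\cdot)\|_{2}.
\end{equation*}

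Second, since $(X,d,\mu)$ is of homogeneous type with doubling exponent $n$ as in \eqref{e2.4}, a maximal $M^{-1/m}$-separated subset of the compact set $K$ has cardinality $N(M)\leq C_{K} M^{n/m}$, and the associated balls $B(x_{i},M^{-1/m})$ cover $K$. Writing $\chi_{K}=\sum_{i=1}^{N(M)}\chi_{E_{i}}$ for a disjoint decomposition with $E_{i}\subset B(x_{i},M^{-1/m})$, for any $f\in L^{2}(X)$ I would apply the triangle inequality, the previous step to each ball, and Cauchy--Schwarz with the orthogonality $\sum_{i}\|\chi_{E_{i}}f\|_{2}^{2}=\|\chi_{K}f\|_{2}^{2}\leq \|f\|_{2}^{2}$, getting
\begin{equation*}
\|F(L)\chi_{K}f\|_{2}\leq\Big(\sum_{i}\|F(L)P_{B(x_{i},M^{-1/m})}\|_{2\to 2}^{2}\Big)^{1/2}\|f\|_{2}\leq C_{K}\,N(M)^{1/2}\|F(M\cdot)\|_{2}\|f\|_{2},
\end{equation*}
which gives \eqref{e1.4} with $a=n/(2m)$.

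The only real obstacle is the bookkeeping in the substitution $G(\lambda)=F(\lambda^{m})$: one must simultaneously rescale by $M^{1/m}$ in \eqref{e1.3} and use the annular support of $F$ to absorb the Jacobian factor $u^{1/m-1}$ into a constant, so that the right-hand side of the resulting bound is genuinely of the form $\|F(M\cdot)\|_{2}$ rather than a weighted $L^{2}$-norm. Beyond that, the covering step is standard and the growth $N(M)\lesssim_{K} M^{n/m}$ of the covering number is what forces the polynomial factor $M^{a}$ in \eqref{e1.4}.
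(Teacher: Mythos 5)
Your proposal is correct and follows essentially the same route as the paper's proof: the substitution $G(\lambda)=F(\lambda^m)$ to transfer \eqref{e1.3} from $\sqrt[m]{L}$ to $L$, followed by covering $K$ with balls at the scale dictated by the support of $G$, counting the covering number via the doubling property, and absorbing the Jacobian of the change of variables using the annular support of $F$. The only (harmless) difference is that you exploit the disjointness of the pieces of $\chi_K$ via Cauchy--Schwarz to get the factor $N(M)^{1/2}$, where the paper's cruder triangle inequality on operator norms gives the full count $N_M$; both yield \eqref{e1.4}, merely with different admissible exponents $a$.
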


\begin{proof}  
Let $M>1$ and let $F$ be a  Borel functions   such that\, {\rm supp} $F\subseteq [M/4, M]$.
Since $K$ is compact, we have a ball $B=B(x_K,r_K)$ with $r_K\geq 1/M$ such
 that $K\subset B(x_K,r_K)$. Then we take a function $G(\lambda)=F(\lambda^m)$ such that $G(\sqrt[m]L)=F(L)$ and
so $\supp G\subset [0,M^{1/m}]$.

For every $1/M>0$, we choose  a sequence $(x_i)_{i=1}^{N_M} \in B(x_K,r_K)$ for some $N_M<\infty$ such that
$d(x_i,x_j)> {1/2M}$ for $i\neq j$ and $\sup_{x\in X}\inf_i d(x,x_i)
\le {1/2M}$. Such a sequence exists because $X$ is separable.
Set  $ B(x_K,r_K)\subseteq \bigcup_{i\in\in N_M} B(x_i, 1/M)$.
 Note that for every $1\leq i, j\leq  N_M$
 $$
 V(x_i, 1/M)\leq C\bigg( 1+\frac{d(x_i, x_j)}{1/M}\bigg )^NV(x_j,1/M)\leq C(r_KM)^n V(x_j,1/M).
 $$
Without loss of generity,  we assume that $x_1=x_K.$ Then we have
 \begin{eqnarray*}
 N_M (r_KM)^{-n} V(x_1, 1/M)&\leq& C\sum_{i\in N_M} V(x_i, 1/M)
  \leq  C V(x_K, r_K)\\
  &\leq& C {V(x_K, r_K)\over V(x_K, 1/M)} V(x_K, 1/M) \\
 &\leq&     C  (r_KM)^{n} V(x_1, 1/M),
 \end{eqnarray*}
so  $  N_M  \leq Cr_K^{2n}M^{2n}$.
 Therefore,
\begin{align*}
\|F(L)\chi_{K}\|_{2\to 2}&\leq \|G(\sqrt[m]L)\chi_{B(x_K,r_K)}\|_{2\to 2}\\
&\leq \sum_{i\in N_M}\|G(\sqrt[m]L)\chi_{B(x_i,1/R)}\|_{2\to 2}\\
&\leq C\sum_{i} \|G(M^{1/m}\cdot)\|_{L^2}\\
&\leq Cr_K^{2n}M^{2n}\|F(M\lambda^m)\|_{L^2}\\
&\leq Cr_K^{2n}M^{2n}\|F(M\cdot)\|_{L^2}.
\end{align*}
This completes the proof of Lemma~\ref{le2.2}.
\end{proof}

 \smallskip
 \begin{rem}
Note that in our Theorems~\ref{th1.1} and \ref{th1.2}, we assume that  $(X,d,\mu)$ is a separable metric measure space,
and we do not need the assumption that
 $X$ is a space   of homogeneous type.
\end{rem}

\medskip

\section{Proof of Theorem~\ref{th1.2} }
 \setcounter{equation}{0}

 To show Theorem~\ref{th1.2}, we note that
  the spectrum of $L$ is purely discrete and the eigenvalues satisfy condition~\eqref{e1.6}. In this case,
$$
S_R(L) f(x)=\sum_{k=0}^{[R]} \sum_{i:\lambda_k}\langle f,\phi_{k,i}\rangle\phi_{k,i}(x)=\sum_{k=0}^{[R]} P_k f(x)
$$
where $\{\phi_{k,i}(x)\}$ are the eigenfunctions corresponding to the eigenvalue $\lambda_k$ and $[R]$ denotes the largest integer number such that $\lambda_{[R]}\leq R$.
From condition~\eqref{e1.6}, we see  that there exists constant $C>0$ such that
$$
\log(2+k)\leq C\log(2+\lambda_k).
$$
Taking $f_k =P_k(L) f $ in \eqref{e2.1}, we have
\begin{align*}
\sum_{k=0}^\infty (\log(2+k))^2 \|P_k(L) f\|_2^2&=  \sum_{k=0}^\infty (\log(2+k))^2 \sum_{i:\lambda_k}\langle f,\phi_{k,i}\rangle^2\\
&\leq C  \sum_{k=0}^\infty (\log(2+\lambda_k))^2 \sum_{i:\lambda_k}\langle f,\phi_{k,i}\rangle^2\\
&= C  \sum_{k=0}^\infty  \sum_{i:\lambda_k}\langle f,\log(2+\lambda_k)\phi_{k,i}\rangle^2\\
&= C  \sum_{k=0}^\infty  \sum_{i:\lambda_k}\langle f,\log(2+L)\phi_{k,i}\rangle^2\\
&= C  \sum_{k=0}^\infty  \sum_{i:\lambda_k}\langle \log(2+L)f,\phi_{k,i}\rangle^2\\
&= C \|\log(2+L)f\|_2^2.
\end{align*}
By the Rademacher-Menshov Theorem~\ref{th2.1},
\begin{eqnarray*}
\big\|\sup_{R>0}|S_R(L) f(x)|\big\|_2^2=\big\|\sup_{N\in \NN}|\sum_{k=0}^N P_k(L) f|\big\|_2^2&\leq& C\sum_{k=0}^\infty (\log(2+k))^2 \|P_k(L) f\|_2^2\\
&\leq& C \|\log(2+L)f\|_2^2,
\end{eqnarray*}
which completes the proof of \eqref{e1.7} in Theorem~\ref{th1.2}.
  \hfill{}$\Box$

\medskip

\section{Proof of Theorem~\ref{th1.1}}\label{continuous}
 \setcounter{equation}{0}
 To show \eqref{e1.5} in Theorem~\ref{th1.1}, we write $\lambda_k=k^{1/(2a)}$ and
$$
P_k f(x):=E_L(\lambda_{k-1},\lambda_{k}]f(x)=S_{\lambda_k}(L)f(x)-S_{\lambda_{k-1}}(L)f(x),
$$
where $a$ is the constant in condition~\eqref{e1.4}.
Then
$$
\sup_{R>0}|S_R f(x)|\leq \sup_{N\in \NN}|\sum_{k=0}^N P_k f(x)|+\sup_{k\in \NN}
\left(\sup_{\lambda_{k}\leq r<\lambda_{k+1}}|E_L(\lambda_{k},r]f(x)|\right)=:{\rm I}+{\rm II}.
$$

For the first term ${\rm I}$, we note that
there exists a  constant $C=C(a)>0$  such that
$$
\log(2+k)\leq C\log(2+\lambda_k).
$$
Following an argument as in Theorem~\ref{th1.2}, we take
  $f_k =P_k f $ in  \eqref{e2.1} to get
\begin{align*}
\sum_{k=1}^\infty (\log(2+k))^2 \|P_k f\|_2^2&=  \sum_{k=1}^\infty (\log(2+k))^2 \int_{(\lambda_{k-1},\lambda_k]}d\langle E_L(\lambda)f,f\rangle\\
&\leq 4\sum_{k=1}^\infty (\log(2+k-1))^2 \int_{(\lambda_{k-1},\lambda_k]}d\langle E_L(\lambda)f,f\rangle\\
&\leq C  \sum_{k=1}^\infty \int_{(\lambda_{k-1},\lambda_k]}(\log(2+\lambda_{k-1}))^2 d\langle E_L(\lambda)f,f\rangle\\
&\leq C  \sum_{k=1}^\infty \int_{(\lambda_{k-1},\lambda_k]}(\log(2+\lambda))^2 d\langle E_L(\lambda)f,f\rangle\\
&\leq C \|\log(2+L)f\|_2^2.
\end{align*}
For $k=0$, it is clear that $(\log(2+0))^2 \|f_0\|_2^2\leq C\|f\|^2_2\leq C \|\log(2+L)f\|_2^2$.
By the Rademacher-Menshov Theorem~\ref{th2.1},
\begin{align}\label{e4.1}
\left\|\sup_{N\in \NN}|\sum_{k=0}^N P_k f(x)|\right\|_2^2\leq C\sum_{k=0}^\infty (\log(2+k))^2 \|P_k f\|_2^2
\leq C\|\log(2+L)f\|_2^2.
\end{align}

Let us estimate the   term ${\rm II}$. To do it, it follows from the fact that $\ell^2\subseteq \ell^\infty$ and
the dual space of $L^2(K,L^1[\lambda_{k},\lambda_{k+1}])$ is $L^2(K,L^\infty[\lambda_{k},\lambda_{k+1}])$ (see \cite{BP}) that
\begin{align}\label{e4.2}
\left\|\sup_{k\in \NN} \left(\sup_{\lambda_{k}\leq r<\lambda_{k+1}}|E_L(\lambda_{k},r]f|\right)\right\|^2_{L^2(K)}
&\leq C \sum_{k\in \NN}\left\| \sup_{\lambda_{k}\leq r<\lambda_{k+1}}|E_L(\lambda_{k},r]f|\right\|^2_{L^2(K)}\nonumber\\
& \leq C\sum_{k\in \NN}\sup_{\|g\|_{L^2(K,L^1[\lambda_{k},\lambda_{k+1}])}=1}
\left|\int_K\int_{[\lambda_{k},\lambda_{k+1}]} E_L(\lambda_{k},r]f(x)g(r,x)drdx\right|^2.
\end{align}
Integration by part gives us
\begin{align} \label{e4.3}
&\int_{[\lambda_{k},\lambda_{k+1}]} E_L(\lambda_{k},r]f(x)g(r,x)dr\nonumber\\
&= E_L(\lambda_{k},\lambda_{k+1}]f(x)\int_{\lambda_{k}}^{\lambda_{k+1}}g(s,x)ds-\int_{\lambda_{k}}^{\lambda_{k+1}}
\left(\int_{\lambda_{k}}^rg(s,x)ds\right) d E_L(\lambda_{k},r]f(x),
\end{align}
where the equality \eqref{e4.3} makes sense in   $L^2(X).$
To go on,
we make a partition of the interval $(\lambda_k,\lambda_{k+1}]$: $\lambda_k=\lambda_{k,0}<\lambda_{k,1}<\ldots<\lambda_{k,J}=\lambda_{k+1}$.
From   the Plancherel type estimate \eqref{e1.2}, we see that
\begin{align*}
\|E_L(\lambda_{k,j-1},\lambda_{k,j}]f\|_{L^2(K)}&\leq
\|\chi_K E_L(\lambda_{k,j-1},\lambda_{k,j}]\|_{2\to 2} \|E_L(\lambda_{k,j-1}, \lambda_{k,j}]f\|_2\nonumber\\
&\leq C_K \lambda_{k,j}^{a} \Big\|\chi_{\left(\frac{ \lambda_{k,j-1}}{\lambda_{k,j}}, 1\right]}\Big\|_2 \|E_L(\lambda_{k,j-1},\lambda_{k,j}]f\|_2\nonumber\\
&\leq C_K \lambda_{k,j}^{a-\frac{1}{2}} (\lambda_{k,j}-\lambda_{k,j-1})^{\frac{1}{2}}  \|E_L(\lambda_{k,j-1},\lambda_{k,j}]f\|_2,
\end{align*}
where   $C_{K}$ is a constant depending on $K$ only, but does not  depend on $j$ and $k$.
This, in combination with the properties
  of Riemann-Stieltjes integral and  the Fatou Lemma, yields that
  \begin{eqnarray}\label{e4.4}
&&\left|\int_K \int_{\lambda_{k}}^{\lambda_{k+1}}\left(\int_{\lambda_{k}}^rg(s,x)ds\right) d E_L(\lambda_{k},r]f(x)dx\right|\nonumber\\
&\leq&\lim\sum_{j=1}^J \int_K \left(\int_{\lambda_{k}}^{\xi_j}|g(s,x)|ds\right) \left|E_L(\lambda_{k,j-1},\lambda_{k,j}]f(x)\right|dx\nonumber\\
&\leq&\lim\sum_{j=1}^J \|\int_{\lambda_{k}}^{\lambda_{k+1}}|g(s,x)|ds\|_{L^2(K)} \|E_L(\lambda_{k,j-1},\lambda_{k,j}]f\|_{L^2(K)}\nonumber\\
&=& \lim\sum_{j=1}^J\|E_L(\lambda_{k,j-1},\lambda_{k,j}]f\|_{L^2(K)}\nonumber\\
&\leq& C_K \lim\sum_{j=1}^J\lambda_{k,j}^{a-\frac{1}{2}} (\lambda_{k,j}-\lambda_{k,j-1})^{\frac{1}{2}}
 \|E_L(\lambda_{k,j-1},\lambda_{k,j}]f\|_2\nonumber\\
&\leq& C_K \lim\left(\sum_{j=1}^J\lambda_{k,j}^{2a-1} (\lambda_{k,j}-\lambda_{k,j-1}) \right)^{1/2}
\left(\sum_{j=1}^J\|E_L(\lambda_{k,j-1},\lambda_{k,j}]f\|^2_2\right)^{1/2}\nonumber\\
&\leq& C_K \left(\lambda_{k+1}^{2a-1} (\lambda_{k+1}-\lambda_{k}) \right)^{1/2} \|E_L(\lambda_{k},\lambda_{k+1}]f(x)\|_2
\end{eqnarray}
From \eqref{e4.4} and \eqref{e4.3}, we have that
\begin{align*}
&\left|\int_K\int_{[\lambda_{k},\lambda_{k+1}]} E_L(\lambda_{k},r]f(x)g(r,x)drdx\right|\\
&=\left|\int_K E_L(\lambda_{k},\lambda_{k+1}]f(x)
\left(\int_{\lambda_{k}}^{\lambda_{k+1}}g(s,x)ds\right)dx-\int_K\int_{\lambda_{k}}^{\lambda_{k+1}}
\left(\int_{\lambda_{k}}^rg(s,x)ds\right) d E_L(\lambda_{k},r]f(x)dx\right|\\
&\leq \|E_L(\lambda_{k},\lambda_{k+1}]f\|_2 +C_K \left(\lambda_{k+1}^{2a-1} (\lambda_{k+1}-\lambda_{k})
\right)^{1/2} \|E_L(\lambda_{k},\lambda_{k+1}]f\|_2.
\end{align*}
Recall that $\lambda_k=k^{1/(2a)}$ and it implies that $\lambda_{k+1}^{2a-1} (\lambda_{k+1}-\lambda_{k})\leq C$ with $C$ independent of $k$. Then
by \eqref{e4.2}, we see  that
\begin{align*}\label{forII}
  {\rm LHS\ of\ \eqref{e4.2} }
 &\leq C_{K} \sum_{k\in \NN} \left(\|E_L(\lambda_{k},\lambda_{k+1}]f\|^2_2 +C_K \lambda_{k+1}^{2a-1} (\lambda_{k+1}-\lambda_{k})
 \|E_L(\lambda_{k},\lambda_{k+1}]f\|^2_2\right)\nonumber\\
&\leq C_{K} \sum_{k\in \NN} \|E_L(\lambda_{k},\lambda_{k+1}]f\|^2_2 \nonumber\\
 &\leq C_{K}  \|f\|^2_2
 \leq C_{K} \|\log(2+L)f\|^2_2,
\end{align*}
which, together with \eqref{e4.1}, completes the proof of \eqref{e1.5} in  Theorem~\ref{th1.1}.
  \hfill{}$\Box$

\medskip

\section{Applications}
 \setcounter{equation}{0}

\subsection{Dirichlet operators on smooth bounded domains.} Let $\Omega$ be a connected bounded open subset of $\RN$ with $C^\infty$ boundary and $L=P(x,D)$
be a second order differential operator of the form
$$
P(x,D)=-\sum \frac{\partial}{\partial x_j}g^{jk}(x)\frac{\partial}{\partial x_k},
$$
where $(g^{jk})\in C^\infty(\Omega)$ is real and positive definite in $\bar \Omega$. Define the operator with Dirichlet boundary
 conditions. In \cite[Section 17.5]{Hor}, it is proved that the number $N(\lambda)$ of eigenvalues $\leq \lambda$ of $P(x,D)$ satisfies
$$
N(\lambda)=O(\lambda^{n/2}),
$$
and so condition \eqref{e1.6} holds.
From Theorem~\ref{th1.2},  we have the following proposition.
\begin{prop}
Let $\Omega$ be a connected bounded open subset of $\RN$ with $C^\infty$ boundary and $P(x,D)$ be a second order differential operator as above.
Assume that $\log(2+P)f\in L^2(\Omega)$. Then
$$
\lim_{R\to \infty} S_R(P) f(x)=f(x)
$$
for almost every $x\in \Omega$.
\end{prop}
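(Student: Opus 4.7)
The plan is to verify the hypotheses of Theorem~\ref{th1.2} for the Dirichlet realization of $P(x,D)$ on $\Omega$, and then invoke that theorem as a black box. Two things must be checked: that the spectrum of $P$ is purely discrete, and that the enumerated eigenvalues $\lambda_1<\lambda_2<\cdots$ satisfy the polynomial lower bound $k\le A\lambda_k^a$ in \eqref{e1.6}.

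First I would recall why $P$ has purely discrete spectrum. Since $(g^{jk})$ is smooth and positive definite on $\overline{\Omega}$ and $\Omega$ is a bounded domain with smooth boundary, $P$ with Dirichlet boundary conditions is a non-negative self-adjoint operator on $L^2(\Omega)$ whose form domain embeds compactly into $L^2(\Omega)$ by the Rellich--Kondrachov theorem. Hence the resolvent of $P$ is compact, and so the spectrum consists solely of eigenvalues of finite multiplicity accumulating only at infinity; in particular the essential spectrum is empty, as required by Theorem~\ref{th1.2}.

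Next I would invoke the Weyl-type counting bound already quoted from \cite[Section~17.5]{Hor}: if $N(\lambda)$ denotes the number of eigenvalues of $P$ in $[0,\lambda]$ counted with multiplicity, then
\begin{equation*}
N(\lambda)=O(\lambda^{n/2}),
\end{equation*}
so there exists $A>0$ with $N(\lambda)\le A\lambda^{n/2}$ for large $\lambda$. Applying this at $\lambda=\lambda_k$ and using the elementary observation $k\le N(\lambda_k)$ noted in the paragraph following \eqref{e1.8}, we obtain
\begin{equation*}
k\le N(\lambda_k)\le A\lambda_k^{n/2},
\end{equation*}
which is precisely condition \eqref{e1.6} with $a=n/2$.

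With both hypotheses in hand, Theorem~\ref{th1.2} applies directly to $L=P$ on $X=\Omega$: for any $f$ with $\log(2+P)f\in L^2(\Omega)$ we get $S_R(P)f(x)\to f(x)$ for a.e.\ $x\in\Omega$, and one also recovers the maximal inequality \eqref{e1.7} on compact subsets of $\Omega$. There is no real obstacle here; the only non-routine input is the Weyl bound, which we are quoting from H\"ormander. The proof therefore reduces to these two verifications followed by a single appeal to Theorem~\ref{th1.2}.
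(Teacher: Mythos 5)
Your proposal is correct and follows the same route as the paper: quote H\"ormander's Weyl bound $N(\lambda)=O(\lambda^{n/2})$ from \cite[Section 17.5]{Hor}, note $k\le N(\lambda_k)\le A\lambda_k^{n/2}$ so that \eqref{e1.6} holds with $a=n/2$, and apply Theorem~\ref{th1.2}. Your added verification of discreteness of the spectrum via compactness of the resolvent is a reasonable (and implicit in the paper) preliminary, but otherwise the two arguments coincide.
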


\subsection{Schr\"odinger operators with growth potentials.} Assume that the potential $V: \RN\to \RR$ is smooth and satisfies the growth conditions:
\begin{eqnarray}\label{e5.1}
|\partial^\alpha V(x)|\leq C_\alpha (1+|x|)^k \mbox{\,for each multiindex \,}\alpha
\end{eqnarray}
and
\begin{eqnarray}\label{e5.2}
V(x)\geq c(1+|x|)^k \mbox{\,for\,} |x|\geq R,
\end{eqnarray}
where $k,c,C_\alpha,R>0$ are appropriate constants.

We consider the Schr\"odinger operator $-\Delta+V(x)$, where the potential $V$ satisfies the above conditions
\eqref{e5.1} and \eqref{e5.2}. An example is the Hermite operator $L=-\Delta +|x|^2.$
Then we have the following Weyl Law result:
$$
N(\lambda)\leq C\big|\{(\xi,x):|\xi|^2+V(x)\leq \lambda\}\big|,
$$
where $|\cdot|$ denotes the measure of the set in $\RR^{2n}$. See for example \cite[Section 6.4]{Zwo}.
Thus we have
$N(\lambda)\leq C\lambda^{n/2+n/k}$ , and   condition \eqref{e1.6} holds.
 From Theorem~\ref{th1.2}, we have the following proposition.
\begin{prop}
Let $L$ be the Schr\"odinger operator $-\Delta+V(x)$ where $V(x)$ satisfies the above growth condition.
Assume that $\log(2+L)f\in L^2({\mathbb R}^n)$. Then
$$
\lim_{R\to \infty} S_R(L) f(x)=f(x)
$$
for almost every $x\in{\mathbb R}^n$.
\end{prop}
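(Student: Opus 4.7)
The plan is to verify the hypotheses of Theorem~\ref{th1.2} for the operator $L=-\Delta+V(x)$ and then invoke it directly.

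First I would check that the spectrum of $L$ is purely discrete. Because $V$ is smooth, bounded below, and satisfies $V(x)\to\infty$ as $|x|\to\infty$ by \eqref{e5.2}, the form domain of $L$ embeds compactly in $L^2(\RN)$ (standard Rellich-type argument using the quadratic form $\int|\nabla u|^2 + \int V|u|^2$ and the weight $V$). Hence the resolvent of $L$ is compact and the spectrum consists of an increasing sequence of eigenvalues of finite multiplicity tending to infinity, so the essential spectrum is empty as required.

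Next I would verify the growth bound \eqref{e1.6} on the eigenvalues via the Weyl-type estimate stated in the text,
\begin{equation*}
N(\lambda)\leq C\bigl|\{(x,\xi)\in\RR^{2n}:|\xi|^2+V(x)\leq\lambda\}\bigr|.
\end{equation*}
For each fixed $x$ with $V(x)\leq\lambda$, the slice in $\xi$ is a ball of radius $(\lambda-V(x))^{1/2}$, whose volume is at most $c_n\lambda^{n/2}$. The set of admissible $x$ satisfies $V(x)\leq\lambda$; by \eqref{e5.2}, for $\lambda$ large this forces $|x|\lesssim\lambda^{1/k}$, so the $x$-volume is at most $C\lambda^{n/k}$. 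Combining, $N(\lambda)\leq C\lambda^{n/2+n/k}$. Writing the eigenvalues as $\lambda_1<\lambda_2<\cdots$ with multiplicities, this yields $k\leq N(\lambda_k)\leq A\lambda_k^{a}$ with $a=n/2+n/k$, which is exactly condition \eqref{e1.6}.

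Finally I would apply Theorem~\ref{th1.2} to conclude that $S_R(L)f(x)\to f(x)$ a.e. whenever $\log(2+L)f\in L^2(\RN)$, and in fact the maximal function estimate \eqref{e1.7} holds on every compact subset $K\subset\RN$. I do not expect any serious obstacle here: the only nontrivial ingredient is the Weyl asymptotic, which is quoted from \cite{Zwo}, and the compactness of the resolvent, which is classical for confining potentials. The proof is thus essentially a verification that the growth hypotheses \eqref{e5.1}--\eqref{e5.2} translate into the polynomial eigenvalue bound required by Theorem~\ref{th1.2}.
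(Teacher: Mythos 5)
Your proposal is correct and follows essentially the same route as the paper: quote the Weyl-type bound $N(\lambda)\leq C|\{(x,\xi):|\xi|^2+V(x)\leq\lambda\}|$ from \cite{Zwo}, estimate the phase-space volume using \eqref{e5.1}--\eqref{e5.2} to get $N(\lambda)\leq C\lambda^{n/2+n/k}$, deduce \eqref{e1.6}, and apply Theorem~\ref{th1.2}. The only addition you make is the explicit compact-resolvent argument for discreteness of the spectrum, which the paper leaves implicit; that is a harmless and correct supplement (just beware the notational clash of using $k$ both for the eigenvalue index and the growth exponent of $V$).
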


\subsection{Schr\"odinger operators with   inverse-square potential.}
Now we consider the inverse square potentials, that is $V(x) = \frac{c}{| x |^2}$.  Fix $n \geq 3$ and assume that
$  -{(n-2)^2/4}< c $.     Define by quadratic form method
$L  = -\Delta + V$ on $L^2(\RR^n, dx)$.
The classical Hardy  inequality
\begin{equation}\label{hardy1}
- \Delta\geq  \frac{(n-2)^2}{4}|x|^{-2},
\end{equation}
shows that  for all $c > -{(n-2)^2/4}$,  the self-adjoint operator $L$  is   non-negative.
Set  $p_c^{\ast}=n/\sigma$, $\sigma= \max\{ (n-2)/2-\sqrt{(n-2)^2/4+c}, 0\}$. If $c \ge 0$ then the semigroup $\exp(-tL)$
is pointwise bounded by the Gaussian semigroup and hence act on all $L^p$ spaces with $1 \le p \le \infty$.  If $ c < 0$, then $\exp(-tL)$
acts as a uniformly bounded semigroup on  $L^p(\RR^n)$ for
$ p \in ((p_c^{\ast})', p_c^{\ast})$ and the range $((p_c^{\ast})', p_c^{\ast})$ is optimal (see for example  \cite{LSV}).

 It follows from  \cite[Theorem III.5]{COSY}  that $L$ satisfies the Plancherel-type estimate \eqref{e1.3}. From Lemma~\ref{le2.2}
  and Theorem~\ref{th1.1}, we obtain

\begin{prop}
Suppose that $n \geq 3$  and $ -{(n-2)^2/4} < c $. Let $L=-\Delta+c|x|^{-2}$ be defined as above.  Assume that $\log(2+L)f\in L^2({\mathbb R}^n)$. Then
$$
\lim_{R\to \infty} S_R(L) f(x)=f(x)
$$
for almost every $x\in {\mathbb R}^n$.
\end{prop}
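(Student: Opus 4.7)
The plan is essentially to reduce the statement to an application of Theorem~\ref{th1.1}, using the already-established Plancherel-type estimate for this operator together with Lemma~\ref{le2.2}. First I would note that $L=-\Delta+c|x|^{-2}$ is well-defined as a non-negative self-adjoint operator on $L^2(\mathbb R^n)$: when $c>-(n-2)^2/4$, the classical Hardy inequality \eqref{hardy1} shows that the quadratic form $\int|\nabla f|^2+c\int |x|^{-2}|f|^2$ is non-negative and closable, so the Friedrichs extension yields a non-negative self-adjoint operator $L$, as recalled in the paragraph preceding the proposition.

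Next I would invoke the Plancherel-type estimate for $L$. Specifically, \cite[Theorem III.5]{COSY} asserts that $L$ satisfies the restricted $L^2$ bound of the form \eqref{e1.3}: for all $M>0$, all $x\in\mathbb R^n$, and all bounded Borel $F$ supported in $[0,M]$,
\[
\|F(\SL)P_{B(x,1/M)}\|_{2\to 2}\leq C\,\|F(M\cdot)\|_{2},
\]
with $m=2$. Since $\mathbb R^n$ with Lebesgue measure is a space of homogeneous type (satisfying the doubling condition \eqref{e2.3}), Lemma~\ref{le2.2} is directly applicable, and it converts the hypothesis \eqref{e1.3} into the Plancherel-type estimate \eqref{e1.4}: for every compact $K\subset\mathbb R^n$ there exist $C_K, a>0$ such that for all $M>1$ and all Borel $F$ with $\supp F\subset[M/4,M]$,
\[
\|F(L)\chi_K\|_{2\to 2}\leq C_K M^a\|F(M\cdot)\|_{L^2}.
\]

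With this, all hypotheses of Theorem~\ref{th1.1} are in force, so under the assumption $\log(2+L)f\in L^2(\mathbb R^n)$ we conclude both the almost-everywhere convergence $S_R(L)f\to f$ and the maximal inequality \eqref{e1.5}. The argument is thus essentially a verification and bookkeeping step; there is no genuine analytic obstacle beyond pointing to the correct combination of results, since the heavy lifting has been carried out in \cite{COSY} (for the Plancherel estimate) and in Theorem~\ref{th1.1} (for the convergence). The only point that merits a brief remark is the role of the range of $c$: the restriction $c>-(n-2)^2/4$ is exactly what is needed both for self-adjointness and non-negativity of $L$ via Hardy's inequality and for the Plancherel estimate \eqref{e1.3} to hold as stated in \cite[Theorem III.5]{COSY}, so no further case analysis is required.
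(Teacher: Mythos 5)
Your proof is correct and follows essentially the same route as the paper: cite \cite[Theorem III.5]{COSY} for the Plancherel-type estimate \eqref{e1.3}, pass to \eqref{e1.4} via Lemma~\ref{le2.2} using that $\mathbb R^n$ is a space of homogeneous type, and then apply Theorem~\ref{th1.1}. The extra remarks on self-adjointness via Hardy's inequality are harmless but not part of the paper's argument, which simply records the hypothesis $c>-(n-2)^2/4$ in the setup.
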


\medskip

\noindent
\subsection{Scattering operators.}
Assume now that $n= 3$ and   $V$ is a real-valued measurable function such that
\begin{eqnarray}\label{eq10.1}
\int_{{\mathbb R}^6} \frac{|V(x)|\, |V(y)|}{|x-y|^2}dx dy < (4\pi)^2 \quad \ \ \mbox{and} \quad \ \
\sup_{x\in{\mathbb R}^3}
\int_{{\mathbb R}^3} \frac{ |V(y)| }{|x-y|}dy< 4\pi.
\end{eqnarray}
Suppose that $L =-\Delta  + V$ on ${\mathbb R}^3$
 with a real-valued    $V$ which satisfies   \eqref{eq10.1}.

From  \cite[Proposition III.6]{COSY}, we know that $L$ satisfies the Plancherel-type estimate \eqref{e1.2} and \eqref{e1.3}. From Lemma~\ref{le2.2}
  and Theorem~\ref{th1.1},  we have the following  proposition.

\begin{prop}
Let $L=-\Delta+V(x)$ be defined on $\RR^3$ as above.  Assume that $\log(2+L)f\in L^2({\mathbb R}^3)$. Then
$$
\lim_{R\to \infty} S_R(L) f(x)=f(x)
$$
for almost every $x\in {\mathbb R}^3$.
\end{prop}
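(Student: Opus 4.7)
The plan is to realize this proposition as a direct instance of Theorem~\ref{th1.1}: the hypothesis $\log(2+L)f\in L^2(\mathbb{R}^3)$ is already in the exact form demanded by that theorem, so the only real work is to verify the localized Plancherel-type estimate \eqref{e1.4} for the Schr\"odinger operator $L=-\Delta+V$ on $\mathbb{R}^3$.

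First I would ensure that $L$ is a bona fide non-negative self-adjoint operator on $L^2(\mathbb{R}^3)$. Under the assumptions \eqref{eq10.1}, the potential $V$ is controlled in a Kato-type sense (the second condition), while the first inequality plays the role of a Birman--Schwinger smallness assumption ensuring that the sesquilinear form associated to $-\Delta+V$ on $H^1(\mathbb{R}^3)$ is closed, symmetric, and bounded below by zero. Quadratic-form methods then yield a non-negative self-adjoint realization $L$ with spectral resolution $E_L(\lambda)$, so that $S_R(L)f$ is meaningful.

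Second, I would appeal to \cite[Proposition III.6]{COSY}, which is designed for exactly this class of three-dimensional scattering Schr\"odinger operators and provides the Plancherel-type estimate \eqref{e1.2} (equivalently \eqref{e1.3}). Since $(\mathbb{R}^3, |\cdot|, dx)$ is a space of homogeneous type in the sense of \eqref{e2.3}--\eqref{e2.5}, Lemma~\ref{le2.2} upgrades \eqref{e1.3} to the compact-support estimate \eqref{e1.4}: for every compact $K\subset \mathbb{R}^3$ there exist constants $C_K, a>0$ such that
\[
\|F(L)\chi_K\|_{2\to 2}\leq C_K M^{a}\|F(M\cdot)\|_{L^2}
\]
for every $M>1$ and every Borel function $F$ with $\supp F\subset [M/4,M]$.

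With \eqref{e1.4} verified, Theorem~\ref{th1.1} applies directly and delivers both the almost-everywhere convergence $S_R(L)f\to f$ and the local maximal inequality \eqref{e1.5}. The only genuinely delicate ingredient in this chain is the Plancherel estimate itself, which is the content of \cite[Proposition III.6]{COSY}; the smallness conditions in \eqref{eq10.1} enter there through a perturbative comparison with the free Laplacian on $\mathbb{R}^3$ and the associated Stein--Tomas $(1,2)$-restriction bound. Within the present argument, however, there is no further obstacle: once the Plancherel estimate is imported and Lemma~\ref{le2.2} applied, the conclusion is automatic from Theorem~\ref{th1.1}.
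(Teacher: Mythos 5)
Your proposal is correct and follows essentially the same route as the paper: import the Plancherel-type estimates \eqref{e1.2}--\eqref{e1.3} from \cite[Proposition III.6]{COSY}, upgrade them to \eqref{e1.4} via Lemma~\ref{le2.2} (using that $\mathbb{R}^3$ with Lebesgue measure is a space of homogeneous type), and then apply Theorem~\ref{th1.1}. Your additional remarks on the quadratic-form realization and non-negativity of $L$ under \eqref{eq10.1} are sensible background that the paper leaves implicit, but they do not change the argument.
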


\medskip

 \noindent
{\bf Acknowledgments.}    P. Chen was supported by NNSF of China 12171489.
 X.T. Duong was supported by  the Australian Research Council (ARC) through the research
grant DP190100970.
 L. Yan was supported by the NNSF of China
 11871480  and by the Australian Research Council (ARC) through the research
grant DP190100970.

\bigskip

\end{document}